\title{The ideals of the homological Goldman Lie algebra}
\author{Kazuki Toda
\thanks{Affiliation:Graduate School of Mathematical Sciences, University of Tokyo 
3-8-1 Komaba Meguro-ku, Tokyo 153-8914, JAPAN \ 
Email:ktoda@ms.u-tokyo.ac.jp}
}
\date{\today}
\theoremstyle{definition}
\newtheorem{Def}{Definition}[section]
\newtheorem{rem}[Def]{Remark}
\newtheorem{ex}[Def]{Example}
\theoremstyle{plain}
\newtheorem{Prop}[Def]{Propositon}
\newtheorem{Lem}[Def]{Lemma}
\newtheorem{Thm}[Def]{Theorem}
\newtheorem{Cor}[Def]{Corollary}
\def\mbN{\mathbb{N}}
\def\mbZ{\mathbb{Z}}
\def\mbQ{\mathbb{Q}}
\def\mbR{\mathbb{R}}
\begin{document}
\maketitle

\begin{abstract}
   We determine all the ideals of the homological Goldman Lie algebra,  which reflects the structure of an oriented surface. 
\end{abstract} 
\section{Introduction}
By a {\it surface}, we call an oriented compact connected two-dimensional smooth manifold with boundary. 
It is well known that the first homology group and the intersection form of a surface reflect the topological structure of the surface. For example, they have information about the genus and the boundary components of the surface. 

To study them in details, we consider a Lie algebra coming from them. We call it the homological Goldman Lie algebra. 
Goldman introduced the Lie algebra for study of the moduli space of $GL_1(\mbR )$-flat bundles over the surface[G]p.295-p.297. We will define the Lie algebra later. 

The homological Goldman Lie algebra is infinite dimensional and we can define this algebra only from algebraic information. 
So, it is interesting in an algebraic context. 
The homological Goldman Lie algebra comes from the first homology group and its intersection form. 
So, it is also interesting in a geometric context. 
For example, the homological Goldman Lie algebra is exactly the subalgebra of all Fourier polynomials in the Poisson algebra on the symplectic torus if the surface is closed. 
Moreover, we can consider a more complicated Lie algebra coming from free loops and the intersection form. 
Take two free loops $\alpha$ and $\beta$ on the surface in general position. 
We define $[\alpha ,\beta ]:=\sum _{p \in \alpha \cap \beta}\varepsilon (p;\alpha ,\beta )\alpha \cdot _p \beta$, where $\varepsilon (p;\alpha ,\beta )$ is the local intersection number of $\alpha$ and $\beta$ at $p$, and $\alpha \cdot _p\beta$ is the free homotopy class of the product in the fundamental group with base point $p$. 
The bracket induces a well-defined operator in the free module with basis the set of free loops, and it is easy to show that the bracket is skew and satisfies the Jacobi identity. 
We call this Lie algebra the Goldman Lie algebra. 
We have a surjective Lie algebra homomorphism from the Goldman Lie algebra onto the homological Goldman Lie algebra[G]p.295-p.297. 

This algebra reflects the topological structure of the surface. 
The purpose of this paper is to study the algebraic structure of the homological Goldman Lie algebra. 
More precisely, we determine the ideals of the homological Goldman Lie algebra. 
The complexity of the Lie algebra depends on the number of the boundary components of the surface. 
If the surface is closed, the number of all the ideals of the homological Goldman Lie algebra is finite. 


Now, we will define the homological Goldman Lie algebra, and formulate our result. 
Let $H$ be a $\mbZ$-module, i.e., an abelian group, and $\langle-,-\rangle :H\times H \rightarrow \mbZ ;(x,y)\mapsto \langle x,y \rangle$ an alternating $\mbZ$-bilinear form. 
For example, we consider $H$ is the first homology group, and $\langle -,-\rangle$ the intersection form on $H$ of a surface. 
We define a $\mbZ$-linear map
$\mu :H\rightarrow {\rm Hom} _{\mbZ}(H,\mbZ )$
by 
$\mu (x)(y)=\langle x,y\rangle$. 
Denote by $\mbQ H$ the $\mbQ$-vector space with basis the set $H$;
\[
\mbQ H :=\left\{ \sum _{i=1}^n c_i[x_i] \mid n \in \mbN ,c_i \in \mbQ ,x_i \in H \right\} ,
\]
where $[-]:H\rightarrow \mbQ H$ is the embedding as basis. 
Here, we remark $c[x]\neq [cx]$ for any $c\neq 1$ and $x\in H$. 
For $x,y\in H$, we define a bracket $[-,-]:\mbQ H\times \mbQ H\rightarrow \mbQ H$ by 
$[[x],[y]]:=\langle x,y\rangle [x+y]$. 
It is easy to see that this bracket is skew and satisfies the Jacobi identity \cite{G}p.295-p.297. 
The Lie algebra $(\mbQ H,[-,-])$ is called the {\it homological Goldman Lie algebra of $(H,\langle -,-\rangle )$}. 

For any subgroup $A$ of $H$, 
the Lie algebra $\mbQ H$ is a graded Lie algebra of type $H/A$ \cite{B}Chapter I\hspace{-.1em}I \S 11. Namely, we have
$\mbQ H = \bigoplus _{x+A \in H/A} \mbQ (x+A)$ and $[\mbQ (x+A),\mbQ (y+A)]\subset \mbQ (x+y+A)$. 
Here, $\mbQ (x+A)=\{ \sum _{i=1}^nc_i[x_i]\in \mbQ H \mid x_i \in x+A\}$. 
The {\it $A$-degree} is the degree induced by grading of type $H/A$. 
For $X \in \mbQ (x+A)$, we denote $A${\rm -deg} $X = x+A$. 
In particular, The Lie algebra $\mbQ H$ is graded by the group $\bar{H}:=H/{\rm {\rm ker}}\mu$. 
For $x\in H$, we denote $\bar{x} := x+{\rm {\rm ker}}\mu$.

For $x \in H$, we define $T(x):\mbQ H\rightarrow \mbQ H$ by 
$T(x)([y])=[x+y]$, where $y\in H$. 
The map $T:H \rightarrow GL(\mbQ H)$ induces an action of $H$ on $\mbQ H$. 
For $x,y \in H$ and $Y \in \mbQ \bar{y}$, we have 
$[[x],Y]=\langle x,y\rangle T(x)(Y)$. 

Let $A$ be a subgroup of $H$, and $V$ a subspace of $\mbQ H$. 
We say that {\it $V$ has the $A$-stability} if $T(A)(V)\subset V$. 

Our main theorem is the following. 
\begin{Thm}[Classification of the ideals of $\mbQ H$]\label{The ideals}
For any ideal $\mathfrak{h}$ of $\mbQ H$, there exists a unique pair $(V_0,V)$ such that

(1) $V_0$ and $V$ are subspaces of $\mbQ \bar{0}=\mbQ {\rm ker}\mu$, 

(2) $V$ has the ${\rm ker}\mu$-stability, and 

(3) $\mathfrak{h} = V_0 \oplus \bigoplus _{\bar{x} \in \bar{H}\setminus \bar{0}}T(\bar{x})(V)$,

where $T(\bar{x})(V) := T(x)(V)$, which is well-defined by (2). 
If $\mu =0$,we define $V=0$. 

Conversely, a set $\mathfrak{h}$ with (1),(2) and (3) is an ideal of $\mbQ H$.
\end{Thm}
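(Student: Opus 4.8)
The proof splits into (a) showing that every ideal $\mathfrak h$ of $\mbQ H$ is a graded subspace for the $\bar H$-grading, (b) reconstructing the pair $(V_0,V)$ from the graded pieces of $\mathfrak h$, and (c) the converse. For (a), fix $z\in H$ and consider the operator $D_z$ on $\mbQ H$ given by $D_z(W):=-[[-z],[[z],W]]$. Since $\mathfrak h$ is an ideal, $D_z$ maps $\mathfrak h$ into $\mathfrak h$; and using $[[x],Y]=\langle x,y\rangle T(x)(Y)$ for $Y\in\mbQ\bar y$ together with $T(-z)T(z)=\mathrm{id}$, $\langle z,z\rangle=0$ and $\langle z,\ker\mu\rangle=0$, one computes that $D_z$ acts on $\mbQ\bar x$ as multiplication by $\langle z,x\rangle^{2}$, a scalar depending only on $\bar x$. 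This scalar is insensitive to $\bar x\mapsto-\bar x$, so at first it separates only the $\pm$‑pairs $\{\bar x,-\bar x\}$ --- but that much can be arranged: for classes with $\bar x\neq\pm\bar y$ the elements $\bar x\pm\bar y$ are non‑zero in $\bar H$, so $z\mapsto\langle z,x\pm y\rangle$ are non‑zero homomorphisms $H\to\mbZ$, and since $H$ is not a finite union of kernels of non‑zero homomorphisms to $\mbZ$, a single $z$ can be chosen making all the relevant scalars $\langle z,x\rangle^{2}$ pairwise distinct. A Vandermonde argument on $W,D_zW,D_z^{2}W,\dots\in\mathfrak h$ then shows that for each $W\in\mathfrak h$ the projection of $W$ onto every $\mbQ\bar x\oplus\mbQ(-\bar x)$ lies in $\mathfrak h$. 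To split such a piece $W_{\bar x}+W_{-\bar x}$ with $\bar x\neq\bar 0$, observe that $\bar H=H/\ker\mu$ embeds into ${\rm Hom}_{\mbZ}(H,\mbZ)$ and so is torsion‑free; hence one can pick $g\in H$ with $\langle g,x\rangle\neq0$ and $\bar g\notin\{\bar 0,-\bar x\}$ (replacing $g$ by $2g$ if necessary). Then $\langle g,x\rangle^{-1}[[g],W_{\bar x}+W_{-\bar x}]=T(g)(W_{\bar x})-T(g)(W_{-\bar x})\in\mathfrak h$ has its two homogeneous components in the \emph{distinct} pairs $\{\pm(\bar g+\bar x)\}$ and $\{\pm(\bar g-\bar x)\}$, so $T(g)(W_{\bar x})\in\mathfrak h$ by the step just proved, and applying $[[-g],-]$ --- legitimate since $\bar g+\bar x\neq\bar 0$ and $\langle-g,\bar g+\bar x\rangle=-\langle g,x\rangle\neq0$ --- recovers $W_{\bar x}\in\mathfrak h$. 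Hence $\mathfrak h=\bigoplus_{\bar x}\mathfrak h_{\bar x}$ with $\mathfrak h_{\bar x}:=\mathfrak h\cap\mbQ\bar x$.

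For (b), set $V_0:=\mathfrak h_{\bar 0}$. The recurring tool is the ``legal move'': if $U\in\mathfrak h_{\bar w}$ and $\langle g,w\rangle\neq0$, then $T(g)(U)=\langle g,w\rangle^{-1}[[g],U]\in\mathfrak h$, and provided $\bar w+\bar g\neq\bar 0$ the move is reversible. Using it, one first checks that every $\mathfrak h_{\bar x}$ with $\bar x\neq\bar 0$ has $\ker\mu$‑stability: for $k\in\ker\mu$, pick $g$ with $\langle g,x\rangle\neq0$ and $\bar x+\bar g\neq\bar 0$ and write $T(k)=T(k-g)\circ T(g)$, where $g$ moves legally out of $\mathfrak h_{\bar x}$ and $k-g$ moves legally out of $\mathfrak h_{\bar x+\bar g}$ (the relevant pairings being $\pm\langle g,x\rangle$). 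One then shows, for $\bar x,\bar y\neq\bar 0$, that $T(y-x)(\mathfrak h_{\bar x})=\mathfrak h_{\bar y}$: choose an intermediate class $\bar w$ with $\langle w,x\rangle\neq0$ and $\langle w,y\rangle\neq0$ --- possible because $\bar H$ is not the union of the two proper subgroups $\{\bar w:\langle w,x\rangle=0\}$ and $\{\bar w:\langle w,y\rangle=0\}$ --- and compose the legal moves $\bar x\to\bar w\to\bar y$ and their reverses; the total translation differs from $y-x$ by an element of $\ker\mu$, which is absorbed by the stability just established. Therefore $V:=T(-x)(\mathfrak h_{\bar x})$ is a $\ker\mu$‑stable subspace of $\mbQ\bar 0$ independent of $\bar x\neq\bar 0$, and $\mathfrak h=V_0\oplus\bigoplus_{\bar x\neq\bar 0}T(\bar x)(V)$. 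Uniqueness is forced: (3) must be the homogeneous decomposition of $\mathfrak h$, so $V_0=\mathfrak h\cap\mbQ\bar 0$ and $V=T(-x)(\mathfrak h\cap\mbQ\bar x)$, and if $\mu=0$ there is no non‑zero degree and $V=0$ by convention.

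For (c), let $\mathfrak h$ satisfy (1)--(3). By bilinearity and homogeneity it suffices to bracket a basis vector $[z]$ with a homogeneous element of $\mathfrak h$. If $W=\sum c_i[y_i]\in V_0$ with $y_i\in\ker\mu$, then $\langle z,y_i\rangle=-\langle y_i,z\rangle=0$, so $[[z],W]=0$. If $W=T(x)(W')\in T(\bar x)(V)$ with $W'=\sum c_i[y_i]$, $y_i\in\ker\mu$, then again $\langle z,y_i\rangle=0$ gives $[[z],W]=\langle z,x\rangle T(z+x)(W')$; if $\overline{z+x}=\bar 0$ this vanishes (then $\mu(z)=-\mu(x)$, so $\langle z,x\rangle=-\langle x,x\rangle=0$), and otherwise it lies in $T(\overline{z+x})(V)\subset\mathfrak h$, which is well defined by the $\ker\mu$‑stability of $V$. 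Hence $\mathfrak h$ is an ideal.

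The main obstacle is part (a). Because $D_z$ records only $\langle z,x\rangle^{2}$, homogeneity cannot be obtained from a single eigenvalue separation; one is forced to descend first to the $\{\bar x,-\bar x\}$‑pairs and only then split them by an auxiliary translation, and it is precisely here that torsion‑freeness of $\bar H$ and a careful account of which translations are ``legal moves'' (of the form $\langle g,w\rangle^{-1}[[g],-]$ with $\langle g,w\rangle\neq0$, and how degrees are forbidden to pass through $\bar 0$) become essential. The repeated ``a finite union of proper subgroups, or of kernels of non‑zero homomorphisms to $\mbZ$, is not everything'' lemmas invoked throughout are elementary.
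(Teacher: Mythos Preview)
Your proof is correct and shares the paper's overall architecture: first show every ideal is $\bar H$-graded (the paper's Proposition~\ref{Decomp}), then that the nonzero graded pieces are all translates of one $\ker\mu$-stable $V\subset\mbQ\bar 0$ (Proposition~\ref{Homog} and Corollary~\ref{Stab}), then the converse; and the combinatorial input is the same --- your ``$H$ is not a finite union of kernels of nonzero homomorphisms to $\mbZ$'' is exactly Lemma~\ref{Key}, which the paper proves explicitly. The real difference is in how gradedness is established. The paper inducts on the number of homogeneous components, at each step producing a composite of $\mathrm{ad}$-operators (either $\mathrm{ad}([-x_2])\,\mathrm{ad}([x_2])$ or a four-term variant) that kills one component while keeping a designated one nonzero; no $\{\bar x,-\bar x\}$ detour is needed. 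You instead diagonalise $D_z=-\mathrm{ad}([-z])\,\mathrm{ad}([z])$ in one stroke via Vandermonde, which only separates components up to sign since $D_z$ records $\langle z,x\rangle^{2}$, and then use torsion-freeness of $\bar H$ plus an auxiliary translation to split each $\pm$-pair. Your route is more systematic and makes the eigenvalue structure of $D_z$ transparent; the paper's is shorter and avoids the extra splitting step. For part (b) the two arguments are reformulations of one another: the paper's single composite $\mathrm{ad}([-z])\,\mathrm{ad}([y])\,\mathrm{ad}([-x])\,\mathrm{ad}([z])$, which equals $(\langle z,x\rangle\langle y,z\rangle)^{2}\,T(y-x)$, is your pair of ``legal moves'' through the intermediate class $\bar z$.
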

\section{Preparations for our main theorem}

\begin{Lem}[Key lemma]\label{Key}
If $x_1,\cdots ,x_n \in H\setminus {\rm ker}\mu$, there exists $z \in H$ that satisfies $\langle x_1,z\rangle \neq 0,\cdots$, and $\langle x_n,z\rangle \neq 0$. 
\end{Lem}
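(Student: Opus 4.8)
The plan is to reduce the claim to a statement about a finite-dimensional $\mathbb{Q}$-vector space and a finite union of hyperplanes. First I would observe that the hypothesis $x_i \notin \mathrm{ker}\,\mu$ means precisely that $\mu(x_i) = \langle x_i, - \rangle$ is a nonzero element of $\mathrm{Hom}_{\mathbb{Z}}(H,\mathbb{Z})$, so for each $i$ the set $K_i := \{ z \in H \mid \langle x_i, z \rangle = 0 \}$ is a proper subgroup of $H$. What we must show is that $H \neq \bigcup_{i=1}^n K_i$, i.e. that $H$ is not covered by finitely many of these proper subgroups.

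The obstacle is that a general abelian group \emph{can} be a finite union of proper subgroups (e.g. $\mathbb{Z}/2 \oplus \mathbb{Z}/2$), so I cannot invoke a purely group-theoretic fact; I must use that each $K_i$ is the kernel of a homomorphism to $\mathbb{Z}$. The way around this is to pass to $\mathbb{Q}$-coefficients. Consider the $\mathbb{Q}$-vector space $H_{\mathbb{Q}} := H \otimes_{\mathbb{Z}} \mathbb{Q}$; each $\mu(x_i)$ extends to a nonzero $\mathbb{Q}$-linear functional $f_i : H_{\mathbb{Q}} \to \mathbb{Q}$, whose kernel is a proper $\mathbb{Q}$-subspace $W_i \subsetneq H_{\mathbb{Q}}$. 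Now I invoke the standard fact that a vector space over an infinite field is never a finite union of proper subspaces; hence there exists $w \in H_{\mathbb{Q}} \setminus \bigcup_{i=1}^n W_i$, that is, $f_i(w) \neq 0$ for all $i$. Writing $w = z' \otimes (1/N)$ for suitable $z' \in H$ and $N \in \mathbb{N}$ (clearing denominators, since $w$ is a finite $\mathbb{Q}$-combination of elements of the image of $H$), and setting $z := z'$, we get $\langle x_i, z \rangle = N f_i(w) \neq 0$ for every $i$, as desired.

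For completeness I would include a short self-contained argument for the vector-space covering fact rather than merely citing it, since it is elementary: proceed by induction on $n$, the cases $n \le 1$ being trivial; for the inductive step, if $H_{\mathbb{Q}} = \bigcup_{i=1}^n W_i$ with all $W_i$ proper and the cover irredundant, pick $u \in W_1 \setminus \bigcup_{i \ge 2} W_i$ and $v \notin W_1$; then among the infinitely many vectors $v + c u$ with $c \in \mathbb{Q}$, none lies in $W_1$, so by pigeonhole two distinct ones lie in a common $W_j$ with $j \ge 2$, forcing $u \in W_j$, a contradiction. The main subtlety to get right is the tensoring step — ensuring $\mu(x_i) \neq 0$ in $H_{\mathbb{Z}}$ really does give a nonzero functional on $H_{\mathbb{Q}}$ (true because $\mathrm{Hom}_{\mathbb{Z}}(H,\mathbb{Z}) \hookrightarrow \mathrm{Hom}_{\mathbb{Q}}(H_{\mathbb{Q}},\mathbb{Q})$) and the denominator-clearing step at the end — but both are routine once set up this way.
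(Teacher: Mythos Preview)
Your argument is correct and takes a genuinely different route from the paper's. The paper proceeds by an explicit inductive construction inside $H$: given $u$ that works for $x_1,\dots,x_{n-1}$ and some $v$ with $\langle x_n,v\rangle\neq 0$, it sets
\[
z = u + \bigl(1 + |\langle x_1,u\rangle| + \cdots + |\langle x_{n-1},u\rangle|\bigr)\,v
\]
and checks by a size estimate that no $\langle x_k,z\rangle$ can vanish. This is entirely elementary, constructive, and exploits the order structure of $\mbZ$ through the absolute values. Your approach instead linearises the problem by tensoring with $\mbQ$ and invokes the hyperplane--avoidance principle over an infinite field, then clears denominators to return to $H$. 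What you gain is a conceptual explanation (the $K_i$ are traces of genuine hyperplanes, so of course finitely many cannot cover) and an argument that visibly depends only on the target of $\mu$ being torsion-free; what the paper gains is an explicit $z$ and no need for tensor products or the auxiliary covering lemma. Both are short and valid; your tensoring and denominator-clearing steps are handled correctly, including the injectivity of $\mathrm{Hom}_{\mbZ}(H,\mbZ)\hookrightarrow\mathrm{Hom}_{\mbQ}(H_{\mbQ},\mbQ)$.
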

\begin{proof}
We prove this by induction on $n$. 
It is clear in the case $n=1$. Consider the case $n>1$. 
Take $u\in H$ satisfying the condition for $i=1,\cdots ,n-1$. 
If $\langle x_n ,u \rangle \neq 0$, there is nothing to do. Suppose not. 
We can choose $v\in H$ such that $\langle x_n,v\rangle \neq 0$, since $x_n \not \in {\rm ker}\mu$. We shall prove that
\[
z  := u + ( 1 + |\langle x_1,u\rangle | + \cdots +|\langle x_{n-1},u\rangle |)v
\]
is desired one. We have
\[
\langle x_n,z\rangle = (1 + |\langle x_1,u\rangle |+ \cdots +|\langle x_{n-1},u\rangle |)\langle x_n,v\rangle \neq 0 .
\]
For $k<n$, $\langle x_k,z\rangle =\langle x_k,u\rangle \neq 0$ if $\langle x_k,v\rangle =0$. 

If not, $\langle x_k,z\rangle \neq 0$, because
\[
|\langle x_k,z\rangle |
\geq  ( 1 + |\langle x_1,u\rangle | + \cdots +|\langle x_{n-1},u\rangle |)|\langle x_k,v\rangle | - |\langle x_k,u\rangle |
> 0 .
\]
\end{proof}
Let $\mathfrak{h}$ be an ideal of $\mbQ H$. Then we have
\begin{Prop}[Decomposition of an ideal with respect to ${\rm ker}\mu$-degree]\label{Decomp}
\[
\mathfrak{h} = \bigoplus _{\bar{x} \in \bar{H}} (\mathfrak{h}\cap \mbQ \bar{x})
\]
\end{Prop}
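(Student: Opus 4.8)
The plan is to show that the ideal $\mathfrak{h}$ is a graded subspace with respect to the $\bar{H}$-grading on $\mbQ H$. Since $\mbQ H = \bigoplus_{\bar{x} \in \bar{H}} \mbQ\bar{x}$ as a vector space, the inclusion $\bigoplus_{\bar{x}} (\mathfrak{h} \cap \mbQ\bar{x}) \subset \mathfrak{h}$ is automatic, so the real content is the reverse inclusion: every element of $\mathfrak{h}$ decomposes as a sum of its homogeneous components, each of which lies in $\mathfrak{h}$.

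First I would take an arbitrary $X \in \mathfrak{h}$ and write $X = \sum_{i=1}^m X_i$ with $X_i \in \mbQ\bar{x_i}$ nonzero for distinct classes $\bar{x_1}, \ldots, \bar{x_m} \in \bar{H}$; I want to prove each $X_i \in \mathfrak{h}$. The idea is to apply the adjoint action of suitable elements $[z]$ for $z \in H$ and use a Vandermonde-type argument to separate the components. Concretely, for $z \in H$ and $Y \in \mbQ\bar{y}$ we have $[[z], Y] = \langle z, y\rangle\, T(z)(Y)$, so $[[z], X] = \sum_i \langle z, x_i\rangle\, T(z)(X_i)$. Iterating, $(\mathrm{ad}[z])^k (X) = \sum_i \langle z, x_i\rangle^k\, T(z)^k(X_i) = T(z)^k\bigl(\sum_i \langle z, x_i\rangle^k X_i\bigr)$, since $T(z)$ commutes with scalars and is $\mbQ$-linear. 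Here I must be slightly careful: the classes $\bar{x_i}$ are distinct, but I need the scalars $\langle z, x_i\rangle$ to be distinct for a single well-chosen $z$ in order to run the Vandermonde inversion.

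The main obstacle is producing such a $z$. The pairing $\langle z, x_i\rangle$ depends only on $\bar{x_i}$, and distinct classes in $\bar{H} = H/\mathrm{ker}\,\mu$ means $\mu(x_i - x_j) \neq 0$ for $i \neq j$, i.e. $x_i - x_j \notin \mathrm{ker}\,\mu$. By the Key Lemma (Lemma~\ref{Key}) applied to the finitely many nonzero elements $x_i - x_j$ (for $i < j$), there exists $w \in H$ with $\langle x_i - x_j, w\rangle \neq 0$ for all $i < j$, equivalently $\langle x_i, w\rangle \neq \langle x_j, w\rangle$ for all $i \neq j$. Fix this $w$. Then the numbers $\lambda_i := \langle w, x_i\rangle = -\langle x_i, w\rangle$ are pairwise distinct rationals (in fact integers). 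Now from $(\mathrm{ad}[w])^k(X) = T(w)^k\bigl(\sum_i \lambda_i^k X_i\bigr) \in \mathfrak{h}$ for $k = 0, 1, \ldots, m-1$, and since each $T(w)^k$ is invertible, we get $\sum_i \lambda_i^k X_i \in T(w)^{-k}(\mathfrak{h})$; but actually it is cleaner to first handle the $\lambda_i = 0$ case separately — at most one $\lambda_i$ can be zero since the $\lambda_i$ are distinct. If all $\lambda_i$ are nonzero, then $(\mathrm{ad}[w])^k(X) \in \mathfrak{h}$ and applying $T(w)^{-k}$ is not needed; instead I use that the Vandermonde matrix $(\lambda_i^k)_{0 \le k \le m-1,\, 1 \le i \le m}$ is invertible over $\mbQ$, so each $T(w)^{?}(X_i)$ — wait, the $T(w)^k$ factors differ per $k$, so I should instead write $(\mathrm{ad}[w])^k(X) = \sum_i \lambda_i^k T(w)^k(X_i)$ and note $T(w)^k(X_i) \in \mbQ(\overline{kw + x_i})$, which are the same class for fixed $k$ but vary with $k$ — this does not immediately give a single Vandermonde system.

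To fix this cleanly I would instead argue as follows. Since $T(w)$ is a Lie algebra automorphism commuting appropriately, note that if $Y \in \mathfrak{h}$ then $[[{-w}], [[w], Y]]$ recovers a scaled copy without net translation; more systematically, the operator $\mathrm{ad}[w]$ preserves $\mathfrak{h}$, and I claim the operator $D$ on $\mbQ H$ defined by $D(Y) = $ "the $\mathrm{ad}[w]$ action followed by $T(-w)$" is again $\mbQ$-linear, preserves each $\mbQ\bar{x}$, acts on $\mbQ\bar{x}$ as multiplication by $\langle w, x\rangle$, and preserves $\mathfrak{h}$ provided $\mathfrak{h}$ has suitable stability — but $\mathfrak{h}$ is an ideal so $T(\pm w)(\mathfrak{h}) \subset \mathfrak{h}$ need not hold a priori. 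The honest route, which I expect the author takes, is: apply $\mathrm{ad}[w]$ repeatedly to get the vectors $V_k := (\mathrm{ad}[w])^k(X) \in \mathfrak{h}$ for $k \ge 0$, observe $V_k = \sum_i \lambda_i^k\, T(w)^k(X_i)$, and then for each fixed target class consider instead $[[-w], -]$ composed in to translate back. Concretely $(\mathrm{ad}[{-w}])^k(\mathrm{ad}[w])^k(X)$ has leading behavior $\prod(\text{nonzero})\cdot\sum_i \lambda_i^{2k} X_i$ up to a nonzero scalar and no net translation; collecting these for $k = 0, \ldots, m-1$ gives a genuine Vandermonde system in $\lambda_i^2$ (distinct if no two $\lambda_i$ are negatives of each other — which I can also arrange via the Key Lemma by additionally requiring $\langle x_i + x_j, w\rangle \neq 0$). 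Solving it extracts each $X_i \in \mathfrak{h}$, completing the reverse inclusion. I expect the distinctness-of-scalars step, i.e. the correct invocation of the Key Lemma to make the Vandermonde matrix invertible, to be the crux; once the $X_i \in \mathfrak{h}$, summing the equalities $\mathfrak{h} \cap \mbQ\bar{x} = \mathfrak{h} \cap \mbQ\bar{x}$ over $\bar{x}$ and combining with the trivial inclusion finishes the proof.
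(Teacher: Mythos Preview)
Your Vandermonde strategy is reasonable, but the final step contains a real gap. You assert that the Key Lemma lets you choose $w$ with $\langle x_i+x_j,w\rangle\neq 0$ for all $i\neq j$, so that the squares $\lambda_i^2$ are pairwise distinct. The Key Lemma, however, applies only to elements \emph{outside} $\ker\mu$. If two of your degrees happen to satisfy $\bar{x_i}=-\bar{x_j}$ (which is allowed: they are distinct nonzero classes), then $x_i+x_j\in\ker\mu$ and $\langle x_i+x_j,w\rangle=0$ for \emph{every} $w\in H$. In that case $\lambda_i=-\lambda_j$ no matter which $w$ you pick, the operator $(\mathrm{ad}[-w])^k(\mathrm{ad}[w])^k$ acts by the same scalar $(-1)^k\lambda_i^{2k}$ on both $\mbQ\bar{x_i}$ and $\mbQ\bar{x_j}$, and your Vandermonde system can only extract $X_i+X_j\in\mathfrak{h}$, not the individual components. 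More conceptually, every ``net-translation-zero'' word in $\mathrm{ad}[w]$ and $\mathrm{ad}[-w]$ acts on $\mbQ\bar{y}$ by an even polynomial in $\langle w,y\rangle$, so it can never separate a class from its negative.

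The paper avoids this by not trying to invert a single Vandermonde system. Instead it argues by induction on the number $n$ of homogeneous components: given $X=X_1+\cdots+X_n$ with all $\bar{x_i}\neq\bar 0$, it manufactures (using two or four $\mathrm{ad}$'s with zero net translation) an element $\sum_i c_iX_i\in\mathfrak{h}$ in which some specified coefficient $c_k$ is nonzero while some other $c_j$ vanishes; the vanishing comes from a factor like $\langle z,x_j-x_j\rangle=0$ and requires no hypothesis on $x_k+x_j$, so the case $\bar{x_k}=-\bar{x_j}$ causes no trouble. This drops the number of terms by at least one and the induction closes. A separate short step then handles the (at most one) component with $\bar{x_i}=\bar 0$. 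Your approach can be rescued---for instance by treating the residual pairs $X_i+X_j$ with $\bar{x_i}=-\bar{x_j}$ separately, or by using a four-fold operator built from two different elements $w,w'$ with $\langle w,w'\rangle\neq 0$---but as written the appeal to the Key Lemma for $x_i+x_j$ is invalid.
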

\begin{proof}
It is clear that the sum $\Sigma  (\mathfrak{h}\cap \mbQ \bar{x})$ is a direct sum and $\mathfrak{h}$ includes $\oplus _{\bar{x} \in \bar{H}}(\mathfrak{h}\cap \mbQ \bar{x})$. 
Let $X\in \mathfrak{h}\setminus 0$. 
Since $X \in \mathfrak{h}\subset \mbQ H = \oplus _{\bar{x}\in \bar{H}} \mbQ \bar{x}$, 
there exist $n \geq 1$and $x_i\in H$ and $X_i\in (\mbQ \bar{x_i})\setminus 0$ for $i=1,\cdots ,n$ such that $\bar{x_i}\neq \bar{x_j}$ if $i\neq j$ and $X=X_1 +\cdots + X_n$. 
It suffices to show $X_i\in \mathfrak{h}$ for all $i=1,\cdots ,n$. 

{\bf Step 1}
 {\it If $\bar{x_i}\neq 0$ for all $i=1,\cdots ,n$, then $X_i\in \mathfrak{h}$ for all $i=1,\cdots ,n$. } 
 
We show this case by induction on $n$, the number of the non-zero components. 

{\bf Claim} 
{\it Suppose $n>1$. For $k = 1,\cdots ,n$,
 there exist $c_1,\cdots $, and $c_n \in \mbQ$ such that (1) $c_k\neq 0$, (2) there exists some $j=1,\cdots ,n$ with $c_j= 0$, and (3) $c_1 X_1 + \cdots + c_n X_n \in \mathfrak{h}$. }

\begin{proof}[Proof of Claim]
We may assume $k = 1$. 
First we consider the case $\langle x_2,x_1\rangle \neq 0$. 
\[
Y:= ad([-x_2])ad([x_2])(X)
 = \sum _{i=1}^n\langle x_2,x_i\rangle \langle -x_2,x_i+x_2\rangle X_i
\]
Since $\langle x_2,x_1\rangle \langle -x_2,x_1+x_2\rangle \neq 0$, $\langle x_2,x_2\rangle =0$, and $Y\in \mathfrak{h}$, this claim holds. 

Second we consider the other case, i.e., $\langle x_2,x_1\rangle =0$. 
Since $\bar{x_1}\neq \bar{0},\ \bar{x_2}\neq 0$, and $\bar{x_1}\neq \bar{x_2}$, by Lemma \ref{Key},
 we can choose $z\in H$ that satisfies $\langle x_1,z\rangle \neq 0,\ \langle x_2,z \rangle \neq 0$, and $\langle x_1-x_2,z \rangle \neq 0$. 
\begin{eqnarray*}
 Y&:=&ad([x_2-z])ad([z])ad([-z])ad([z-x_2])(X) \\
&=&\sum _{i=1}^n \langle z-x_2,x_i \rangle \langle -z,x_i+z-x_2 \rangle \langle z,x_i-x_2 \rangle \langle x_2-z,x_i-x_2+z\rangle X_i
\end{eqnarray*}
Since $ \langle z-x_2,x_1 \rangle \langle -z,x_1+z-x_2 \rangle \langle z,x_1-x_2 \rangle \langle x_2-z,x_1-x_2+z\rangle \neq 0$, $\langle z,x_2-x_2\rangle =0$, and $Y\in \mathfrak{h}$, this claim holds. 
\end{proof}
\begin{proof}[Proof of Step 1]
Induction on $n$. 
If $n=1$, $X_1=X\in \mathfrak{h}$. 
Suppose $n>1$. 
By the inductive assumption and the assertions (2) and (3) of Claim, we have $c_k X_k \in \mathfrak{h}$. 
By the assertion (1) of Claim, we have $X_k\in \mathfrak{h}$. 

This proves Step 1. 
\end{proof}

{\bf Step 2}
{\it If $\bar{x_i}=\bar{0}$ for some $i=1,\cdots ,n$, then $X_j \in \mathfrak{h}$ for all $j=1,\cdots ,n$. }

\begin{proof}[Proof of Step 2]

The index $i$ with $\bar{x_i}=\bar{0}$ is unique since $\bar{x_i}\neq \bar{0}$ if $j\neq i$. 
We can assume $i=1$. 
If $n=1$, we have $X_1=X\in \mathfrak{h}$. 
Suppose $n>1$. 
Since $\bar{x_2}\neq \bar{0},\cdots$, and $\bar{x_n}\neq \bar{0}$, by Lemma \ref{Key}, there exists $z \in H$ such that $\langle x_2,z\rangle \neq 0,\cdots$, and $\langle x_n,z\rangle \neq 0$. 
\[
Y := ad([-z])ad([z])(X)
=\sum _{j=1}^n \langle z,x_j\rangle \langle -z,x_j+z\rangle X_j
\]
Since $\langle z,x_1\rangle =0$, and Step 1, we have
$\langle z,x_j\rangle \langle -z,x_j+z\rangle X_j \in \mathfrak{h}$ for all $i=1,\cdots ,n$. Since $\langle z,x_j \rangle \langle -z,x_j+z\rangle \neq 0$, we obtain $X_j \in \mathfrak{h}$ for all $i=2,\cdots ,n$. 
Moreover, $X_1=X-X_2-\cdots -X_n \in \mathfrak{h}$. 

This proves Step 2.
\end{proof}
Step 1 and Step 2 complete Proposition \ref{Decomp}. 
\end{proof}
\begin{Prop}[Homogeneity of an ideal]\label{Homog}
For $x,y \in H\setminus {\rm ker}\mu$,
\[
T(-x)(\mathfrak{h}\cap \mbQ \bar{x})
 = T(-y)(\mathfrak{h}\cap \mbQ \bar{y})
\]
\end{Prop}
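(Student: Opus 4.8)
The plan is to show the two translates are equal by showing each is contained in the other; by symmetry it suffices to prove $T(-y)(\mathfrak{h}\cap\mbQ\bar{y})\subset T(-x)(\mathfrak{h}\cap\mbQ\bar{x})$, i.e.\ that for every $Y\in\mathfrak{h}\cap\mbQ\bar{y}$ we have $T(x-y)(Y)\in\mathfrak{h}\cap\mbQ\bar{x}$. Note $T(x-y)$ sends $\mbQ\bar{y}$ to $\mbQ\bar{x}$ because $\overline{x-y}$ acts as the translation taking $\bar y$ to $\bar x$, so the membership in $\mbQ\bar x$ is automatic; the real content is that $T(x-y)(Y)$ lands back in $\mathfrak{h}$. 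The natural tool is the adjoint action: for a single generator, $[[z],Y]=\langle z,y\rangle\,T(z)(Y)$ whenever $Y\in\mbQ\bar y$, so applying $ad([z])$ realises the translation by $z$ up to the scalar $\langle z,y\rangle$, and since $\mathfrak{h}$ is an ideal, $[[z],Y]\in\mathfrak{h}$.

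The first step is therefore to realise the translation $T(x-y)$ as a composition of such elementary translations by elements $z_1,\dots,z_k$ with $z_1+\cdots+z_k=x-y$, chosen so that none of the intermediate scalars vanishes. Concretely, I want a chain $y=w_0,w_1,\dots,w_k=x$ with $w_{i}-w_{i-1}=z_i$, and I need $\langle z_i, w_{i-1}\rangle\neq 0$ at each stage so that the corresponding $ad([z_i])$ does not kill the (single-$\bar H$-degree) element we have built so far; then composing gives $T(z_k)\cdots T(z_1)(Y)=c\cdot T(x-y)(Y)$ for a nonzero rational $c$, and dividing by $c$ shows $T(x-y)(Y)\in\mathfrak{h}$. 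To find such a chain, I would use the Key Lemma (Lemma \ref{Key}): the obstruction is exactly that $x-y$, or $y$ itself, might be orthogonal to the relevant elements, but the Key Lemma lets me pick an auxiliary $z\in H$ with $\langle y,z\rangle\neq 0$, $\langle x,z\rangle\neq0$ (note $x,y\in H\setminus{\rm ker}\mu$, which is the hypothesis), and then route from $y$ to $x$ through $z$: e.g.\ first translate by some multiple of $z$, or by $z$ and then by $x-y-z$, adjusting the multiplicities as in the proof of the Key Lemma so that every intermediate pairing is nonzero. A clean choice is a two-step path $y\to y+z\to x$ with $z$ chosen (via Lemma \ref{Key}) so that $\langle z,y\rangle\neq0$ and $\langle x-y-z,\,y+z\rangle\neq 0$; if a single such $z$ does not exist one enlarges the path or rescales, exactly mirroring the "add a large multiple of $v$" trick in Lemma \ref{Key}.

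Once the inclusion $T(-y)(\mathfrak{h}\cap\mbQ\bar y)\subset T(-x)(\mathfrak{h}\cap\mbQ\bar x)$ is established, running the argument with $x$ and $y$ interchanged gives the reverse inclusion and hence equality. I expect the main obstacle to be the bookkeeping in choosing the chain $z_1,\dots,z_k$: one must guarantee \emph{simultaneously} that the partial sums $w_i=y+z_1+\cdots+z_i$ stay in the right cosets (trivial) and that each pairing $\langle z_{i+1},w_i\rangle$ is nonzero (this is where Lemma \ref{Key}, applied to the finitely many elements that appear, does the work). Since $Y$ is supported in a single $\bar H$-degree, we never have to worry about different homogeneous components interfering — that is precisely why Proposition \ref{Decomp} was proved first — so no cancellation can occur and each $ad([z_{i+1}])$ acts as a nonzero scalar times a translation. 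That reduces the whole proposition to the combinatorial lemma about steering through $H$ with nonzero intersection numbers, which is handled by the Key Lemma.
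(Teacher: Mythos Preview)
Your strategy is correct and matches the paper's: show one inclusion (the other follows by symmetry), reduce to proving $T(y-x)(X)\in\mathfrak{h}$ for $X\in\mathfrak{h}\cap\mbQ\bar x$, and realise this translation as a composite of adjoint maps $ad([z_i])$ with nonzero scalar factors, invoking Lemma~\ref{Key} to guarantee the needed nonvanishing pairings.

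The only difference is in the explicit chain. You propose a two-step path $y\to y+z\to x$ and acknowledge that the second pairing $\langle x-y-z,\,y+z\rangle=\langle x,y\rangle+\langle x,z\rangle$ may require rescaling $z$ to avoid an accidental zero. The paper instead writes down a single four-step composite
\[
ad([-z])\,ad([y])\,ad([-x])\,ad([z])(X)=(\langle z,x\rangle\langle y,z\rangle)^2\,T(y-x)(X),
\]
which routes the degree $\bar x\to\overline{x+z}\to\bar z\to\overline{y+z}\to\bar y$ and needs only $\langle x,z\rangle\neq 0$ and $\langle y,z\rangle\neq 0$, exactly what Lemma~\ref{Key} delivers for $x,y\in H\setminus\ker\mu$. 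This removes the bookkeeping you anticipated; no case analysis or rescaling trick is required. Your approach would work once fleshed out, but the paper's explicit formula is cleaner.
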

\begin{proof}
Let $X^{\prime}\in T(-x)(\mathfrak{h}\cap \mbQ \bar{x})$.
 Then there exists $X\in \mathfrak{h}\cap \mbQ \bar{x}$ with $X^{\prime}=T(-x)(X)$. 
Since $X^{\prime} = T(-y)(T(y-x)(X))$ and ${\rm ker}\mu$-deg$T(y-x)(X)=\overline{y-x}+\bar{x}=\bar{y}$, it is sufficient to prove $T(y-x)(X)\in \mathfrak{h}$. 

By $x,y \in H\setminus {\rm ker}\mu$ and Lemma \ref{Key},
 we can choose $z\in H$ with $\langle x,z\rangle \neq 0$ and $\langle y,z\rangle \neq 0$. 
 We obtain $T(y-x)(X)\in \mathfrak{h}$ because
 \[
ad([-z])ad([y])ad([-x])ad([z])(X)
=(\langle z,x\rangle \langle y,z\rangle )^2T(y-x)(X).
\]
\end{proof}
\begin{Cor}[${\rm ker}\mu$-stability of an ideal]\label{Stab}
For $x\in H\setminus {\rm ker}\mu$ and $v\in {\rm ker}\mu$,
\[
T(v)(T(-x)(\mathfrak{h}\cap \mbQ \bar{x}))
=T(-x)(\mathfrak{h}\cap \mbQ \bar{x})
\qquad
[ \ T(v)(\mathfrak{h}\cap \mbQ \bar{x})
=\mathfrak{h}\cap \mbQ \bar{x} \ ]
\]
\end{Cor}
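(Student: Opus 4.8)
The plan is to deduce both displayed equalities directly from Proposition \ref{Homog} (Homogeneity), which already says that the subspace $T(-x)(\mathfrak{h}\cap\mbQ\bar{x})$ is independent of the chosen representative $x\in H\setminus{\rm ker}\mu$ of its coset. The point is to apply that independence to two different lifts of the \emph{same} coset $\bar{x}$, chosen so that the two lifts differ exactly by $v$.

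First I would set $y:=x-v$. Since ${\rm ker}\mu$ is a subgroup of $H$ and $x\notin{\rm ker}\mu$, the element $y=x-v$ also lies outside ${\rm ker}\mu$, so $y\in H\setminus{\rm ker}\mu$ and Proposition \ref{Homog} applies to the pair $x,y$. On the other hand $-v\in{\rm ker}\mu$, so $\bar{y}=\overline{x-v}=\bar{x}$, whence $\mathfrak{h}\cap\mbQ\bar{y}=\mathfrak{h}\cap\mbQ\bar{x}$. Feeding this into Proposition \ref{Homog} gives
\[
T(-x)(\mathfrak{h}\cap\mbQ\bar{x})
= T(-y)(\mathfrak{h}\cap\mbQ\bar{y})
= T(v-x)(\mathfrak{h}\cap\mbQ\bar{x}).
\]

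Since $T\colon H\to GL(\mbQ H)$ is a homomorphism of the abelian group $H$, we have $T(v-x)=T(v)\circ T(-x)$, so the identity above is precisely
\[
T(v)\bigl(T(-x)(\mathfrak{h}\cap\mbQ\bar{x})\bigr)=T(-x)(\mathfrak{h}\cap\mbQ\bar{x}),
\]
the first asserted equality. For the bracketed reformulation I would apply $T(x)$ to both sides and use $T(x)\circ T(v)\circ T(-x)=T(v)$ together with $T(x)\circ T(-x)={\rm id}$, which yields $T(v)(\mathfrak{h}\cap\mbQ\bar{x})=\mathfrak{h}\cap\mbQ\bar{x}$.

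There is essentially no obstacle here: the only thing requiring a word of justification is that $x-v$ still lies outside ${\rm ker}\mu$, which is immediate because ${\rm ker}\mu$ is a subgroup, and everything else is bookkeeping with the homomorphism $T$. The one idea that makes the argument go through is picking the second representative to be exactly $x-v$ rather than an unrelated element of $H\setminus{\rm ker}\mu$, so that the ambiguity allowed by Proposition \ref{Homog} is precisely the translation $T(v)$ we wish to control.
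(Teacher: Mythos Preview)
Your proof is correct and follows essentially the same approach as the paper: apply Proposition~\ref{Homog} to the pair $x$ and $x-v$, then use $T(-(x-v))=T(v)\circ T(-x)$ together with $\overline{x-v}=\bar{x}$. The paper's version is terser, but your added justifications (that $x-v\notin{\rm ker}\mu$ and the derivation of the bracketed form by applying $T(x)$) are exactly the details one would fill in.
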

\begin{proof}
We apply Proposition \ref{Homog} to $x$ and $x-v$. Then
\[
T(-x)(\mathfrak{h}\cap \mbQ \bar{x})
=T(-(x-v))(\mathfrak{h}\cap \mbQ \overline{x-v})
=T(v)(T(-x)(\mathfrak{h}\cap \mbQ \bar{x}))
\]
\end{proof}
\section{Proof of Theorem \ref{The ideals}}
\begin{proof}
{\bf Existence:} When $\mu =0$, $\bar{0}=H$. So we can define $V_0 =\mathfrak{h}$. 
Assume $\mu \neq 0$.
Then we can choose $x_0 \in H\setminus {\rm ker}\mu$.

By Proposition \ref{Decomp}, we have
\[
\mathfrak{h}= \bigoplus _{\bar{x}\in \bar{H}}(\mathfrak{h}\cap \mbQ \bar{x}).
\]
Let $V_0:=\mathfrak{h}\cap \mbQ \bar{0}$ and $V:= T(-x_0)(\mathfrak{h}\cap \mbQ\bar{x_0})$.
By Corollary \ref{Stab}, $V$ has the ${\rm ker}\mu$-stability. 
And for all $y \in H$, we have 
\[
\mathfrak{h}\cap \mbQ \bar{y}
= T(y)(T(-y)(\mathfrak{h}\cap \mbQ \bar{y}))
= T(y)(T(-x_0)(\mathfrak{h}\cap \mbQ \bar{x_0}))
= T(y)(V)
\]
by Proposition \ref{Homog}. So we obtain
\[
\mathfrak{h} = V_0 \oplus \bigoplus _{\bar{x} \in \bar{H}\setminus \bar{0}}T(\bar{x})(V).
\]
{\bf Uniqueness:}
We assume that $(V_0,V)$ and $(W_0,W)$ satisfy (1), (2), and
\[
V_0 \oplus \bigoplus _{\bar{x} \in \bar{H}\setminus \bar{0}}T(\bar{x})(V)
= W_0 \oplus \bigoplus _{\bar{x} \in \bar{H}\setminus \bar{0}}T(\bar{x})(W).
\]
Then we obtain
\begin{eqnarray*}
V_0
&=&
\mbQ \bar{0} \cap  (V_0 \oplus \bigoplus _{\bar{x} \in \bar{H}\setminus \bar{0}}T(\bar{x})(V)) \\
&=&
\mbQ \bar{0} \cap  (W_0 \oplus \bigoplus _{\bar{x} \in \bar{H}\setminus \bar{0}}T(\bar{x})(W)) 
= W_0.
\end{eqnarray*}
If $\mu =0$,$V=0=W$. Suppose not.Take $\bar{x_0}\in \bar{H}\setminus \bar{0}$.
We obtain $V= W$ because
\begin{eqnarray*}
T(\bar{x_0})(V)
&=&
\mbQ \bar{x_0} \cap ( V_0 \oplus \bigoplus _{\bar{x} \in \bar{H}\setminus \bar{0}}T(\bar{x})(V)) \\
&=&
\mbQ \bar{x_0} \cap ( W_0 \oplus \bigoplus _{\bar{x} \in \bar{H}\setminus \bar{0}}T(\bar{x})(W)) 
=
T(\bar{x_0})(W).
\end{eqnarray*}
{\bf Converse:}
Assume that $(V_0,V)$ satisfies (1) and (2), and let $\mathfrak{h}$ be (3).
It is clear that $\mathfrak{h}$ is a subspace. 

For $X\in V_0$ and $Y \in \mbQ H$, we have $[X,Y]=0 \in \mathfrak{h}$. 

For $v \in V$, $x \in H\setminus {\rm ker}\mu$, and $y \in H$, we define
\[
z:= [T(x)(v),[y]]=\langle x,y\rangle T(x+y)(v).
\]
When $\overline{x+y} \neq 0$, $z\in T(\overline{x+y})(V)\subset \mathfrak{h}$. If not, $z = 0 \in \mathfrak{h}$ because
\[
0 = \langle x+y,y\rangle = \langle x,y\rangle .
\]
Hence, we obtain $[\mathfrak{h},\mbQ H] \subset \mathfrak{h}$.
\end{proof}

\section{Corollaries}
\begin{Prop}
Let $\mathfrak{h}$ be an ideal of $\mbQ H$, and $(V_0,V)$ the pair of $\mathfrak{h}$ in Theorem \ref{The ideals}. 
Then, $\mathfrak{h}$ is abelian if and only if $V=0$. 
\end{Prop}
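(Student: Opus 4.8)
The plan is to prove the equivalence by treating the two implications separately, handling the forward direction ($\mathfrak{h}$ abelian $\Rightarrow V=0$) by contraposition. The reverse direction is immediate: if $V=0$, then by part (3) of Theorem \ref{The ideals} we have $\mathfrak{h}=V_0\subseteq \mbQ\bar 0=\mbQ\,{\rm ker}\,\mu$, and for $X,Y\in\mbQ\,{\rm ker}\,\mu$ the bracket $[X,Y]$ vanishes term by term, exactly as in the proof of the converse in Theorem \ref{The ideals} (each basis bracket $[[x],[y]]$ with $x\in{\rm ker}\,\mu$ equals $\langle x,y\rangle[x+y]=0$ since $\mu(x)=0$). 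Hence $\mathfrak{h}$ is abelian --- in fact central in $\mbQ H$.

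For the converse I would argue contrapositively: assuming $V\neq 0$, I would exhibit two elements of $\mathfrak{h}$ with nonzero bracket. First note $\mu\neq 0$, since otherwise $V=0$ by the convention in Theorem \ref{The ideals}; thus ${\rm ker}\,\mu\neq H$. Fix $0\neq v=\sum_{i=1}^n c_i[x_i]\in V$, so that $x_i\in{\rm ker}\,\mu$ because $V\subseteq\mbQ\bar 0$. Choose $x\in H\setminus{\rm ker}\,\mu$, and then $z\in H$ with $\langle x,z\rangle\neq 0$; replacing $z$ by $z+x$ if necessary --- which leaves $\langle x,z\rangle$ unchanged and forces $z\notin{\rm ker}\,\mu$ --- I may in addition assume $z\notin{\rm ker}\,\mu$. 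Since $\bar x\neq\bar 0$ and $\bar z\neq\bar 0$, both $T(x)(v)\in T(\bar x)(V)\subseteq\mathfrak{h}$ and $T(z)(v)\in T(\bar z)(V)\subseteq\mathfrak{h}$. A direct computation --- using $\langle x+x_i,\,z+x_j\rangle=\langle x,z\rangle$, valid because $x_i,x_j\in{\rm ker}\,\mu$ --- then gives
\[
[T(x)(v),\,T(z)(v)]=\langle x,z\rangle\,T(x+z)\Bigl(\sum_{i,j}c_ic_j\,[x_i+x_j]\Bigr).
\]

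The expression $\sum_{i,j}c_ic_j[x_i+x_j]$ is the square of $v$ under the convolution (group-algebra) multiplication on $\mbQ\bar 0\cong\mbQ[{\rm ker}\,\mu]$, and the crux --- the step I expect to be the only real obstacle --- is that it is nonzero. The cleanest justification is that the group algebra over $\mbQ$ of an abelian group is reduced: it is a directed union, over finitely generated subgroups, of rings $R[t_1^{\pm1},\dots,t_r^{\pm1}]$ with $R$ a finite product of number fields, which have no nonzero nilpotents, so a nonzero element has nonzero square. (If one prefers to avoid this, one can instead pick a character $\phi$ of the finitely generated subgroup generated by the $x_i$ with $\sum_i c_i\phi(x_i)\neq 0$, extend it to a homomorphism ${\rm ker}\,\mu\to\mbC^{\times}$ using divisibility of $\mbC^{\times}$, and evaluate the induced algebra map on $v*v$ to get $\phi(v)^2\neq 0$.) Granting $v*v\neq 0$, and since $\langle x,z\rangle\neq 0$ and $T(x+z)$ is invertible, the displayed bracket is nonzero; hence $[\mathfrak{h},\mathfrak{h}]\neq 0$, so $\mathfrak{h}$ is not abelian, completing the contrapositive.
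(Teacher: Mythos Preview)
Your proof is correct and follows essentially the same route as the paper: show $V=0\Rightarrow\mathfrak{h}\subseteq\mbQ\,{\rm ker}\,\mu$ is abelian, and for $V\neq 0$ pick $x,y$ with $\langle x,y\rangle\neq 0$, then compute $[T(x)(v),T(y)(v)]=\langle x,y\rangle\, T(x+y)(v*v)$. Two remarks: your replacement of $z$ by $z+x$ is unnecessary, since $\langle x,z\rangle\neq 0$ already forces $z\notin{\rm ker}\,\mu$ by antisymmetry; and your justification that $v*v\neq 0$ via reducedness of $\mbQ[{\rm ker}\,\mu]$ is a genuine addition --- the paper simply asserts the bracket is nonzero without argument, so on this point you are more complete than the original.
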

\begin{proof}
Suppose $V=0$. 
Then, since $\mathfrak{h}=V_0\subset \mbQ {\rm {\rm ker}}\mu$, $\mathfrak{h}$ is abelian. 

Conversely, suppose $V\neq 0$. 
Then, since $\mu \neq 0$, there exist $x,y\in H$ with $\langle x,y\rangle \neq 0$. 
Take $v=\sum _{i=1}^n c_i[v_i]\in V\setminus 0$. 
Then, $\mathfrak{h}$ is not abelian, since $T(x)(v),T(y)(v)\in \mathfrak{h}$ and
\[
[T(x)(v),T(y)(v)]=\langle x,y\rangle \sum _{i=1}^n\sum _{j=1}^nc_ic_j[x+y+v_i+v_j]\neq 0.
\]
\end{proof}
We can calculate the center $\mathfrak{z}(\mbQ H)$, the derived subalgebra $[\mbQ H,\mbQ H]$ and the descent sequences $\mbQ H^{(m)}$ and $\mbQ H_{(m)} (m>0)$; 
$
\mathfrak{z}(\mbQ H)=\mbQ ({\rm {\rm ker}}\mu )
$, $
[\mbQ H,\mbQ H]=\mbQ (H\setminus {\rm {\rm ker}}\mu )
$, and $
\mbQ H^{(m)} =\mbQ H_{(m)}=\mbQ (H\setminus {\rm {\rm ker}}\mu )
$ 
where we define 
$
\mbQ H^{(1)}=\mbQ H_{(1)}=[\mbQ H,\mbQ H]
$, $
\mbQ H^{(m)}=[\mbQ H^{(m-1)},\mbQ H^{(m-1)}]
$, and $
\mbQ H_{(m)}=[\mbQ H,\mbQ H_{(m-1)}].
$
\begin{Def}
Let $M_1$, $M_2$ and $N$ be modules over a same commutative ring. 
A bilinear map $f:M_1\times M_2 \rightarrow N$ is said to be {\it non-degenerate} if $f$ satisfies the following two conditions; 

(1) If $f(x_1,x_2)=0$ for all $x_2 \in M_2$, then $x_1=0$. 

(2) If $f(x_1,x_2)=0$ for all $x_1 \in M_1$, then $x_2=0$. 
\end{Def}
\begin{ex}
Let $\Sigma$ be a surface with $\# \pi _0(\partial \Sigma )\leq 1$. 
We consider $H=H_1(\Sigma ,\mbZ )$ and the intersection form $\langle -,-\rangle$ on $H$. 
Then, $H$ is a free $\mbZ$-module, and $\langle -,-\rangle$ is a non-degenerate alternating $\mbZ$-bilinear form. 
\end{ex}
\begin{Cor}
If $\langle -,-\rangle$ is non-degenerate and $H\neq 0$, All the ideals are
\[
0 ,\quad \mbQ [0] ,
 \quad \mbQ (H\setminus 0) , \quad and \quad \mbQ H.
\]
\end{Cor}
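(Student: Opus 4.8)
The plan is to apply the main classification theorem (Theorem~\ref{The ideals}) directly to the non-degenerate case. First I would observe that when $\langle-,-\rangle$ is non-degenerate, we have $\operatorname{ker}\mu = 0$, so $\bar{0} = \{0\}$, $\bar{H} = H$, and $\mbQ\bar{0} = \mbQ[0]$ is one-dimensional over $\mbQ$. By Theorem~\ref{The ideals}, any ideal $\mathfrak{h}$ is determined by a unique pair $(V_0, V)$ of subspaces of the one-dimensional space $\mbQ[0]$, subject to the $\operatorname{ker}\mu$-stability of $V$. Since $\operatorname{ker}\mu = 0$, the stability condition $T(0)(V)\subset V$ is vacuous, so the only constraints are that $V_0, V \in \{0, \mbQ[0]\}$.

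Next I would enumerate the four possible pairs and read off the corresponding ideal from formula (3): $\mathfrak{h} = V_0 \oplus \bigoplus_{\bar{x}\in\bar{H}\setminus\bar{0}} T(\bar{x})(V)$. For $(V_0, V) = (0,0)$ we get $\mathfrak{h} = 0$. For $(\mbQ[0], 0)$ we get $\mathfrak{h} = \mbQ[0]$. For $(0, \mbQ[0])$ we get $\mathfrak{h} = \bigoplus_{x\in H\setminus 0} T(x)(\mbQ[0]) = \bigoplus_{x\in H\setminus 0}\mbQ[x] = \mbQ(H\setminus 0)$, using that $T(x)([0]) = [x]$. For $(\mbQ[0], \mbQ[0])$ we get $\mathfrak{h} = \mbQ[0] \oplus \mbQ(H\setminus 0) = \mbQ H$. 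Uniqueness of the pair in Theorem~\ref{The ideals} guarantees these four ideals are distinct (provided $H\neq 0$, so that $H\setminus 0 \neq \emptyset$ and the last two differ from the first two), and the classification guarantees there are no others.

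The only subtlety — and the one place I would be slightly careful — is checking that $\mu \neq 0$ in the non-degenerate case with $H \neq 0$, so that the special convention ``if $\mu = 0$ we define $V = 0$'' does not collapse the list: if $H\neq 0$ and $\langle-,-\rangle$ is non-degenerate then for any nonzero $x\in H$ there is $y$ with $\langle x,y\rangle\neq 0$, hence $\mu\neq 0$, and all four pairs are genuinely available. I would also note in passing (though it is not strictly needed) that $\mbQ(H\setminus 0) = [\mbQ H, \mbQ H]$ here, consistent with the derived-subalgebra computation given above, which provides a sanity check. I do not expect any real obstacle; the entire argument is a direct specialization of Theorem~\ref{The ideals} once one notes $\dim_{\mbQ}\mbQ\bar{0} = 1$.
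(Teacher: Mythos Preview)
Your proposal is correct and follows essentially the same approach as the paper's own proof: both observe that non-degeneracy gives $\ker\mu = 0$, so $\mbQ\bar{0} = \mbQ[0]$ has only the two subspaces $0$ and $\mbQ[0]$ (automatically $\ker\mu$-stable), and then read off the four ideals from Theorem~\ref{The ideals} using $T(x)(\mbQ[0]) = \mbQ[x]$. Your version is simply more explicit, spelling out the four cases, the distinctness (using $H\neq 0$), and the check that $\mu\neq 0$ so the convention ``$V=0$ if $\mu=0$'' does not interfere.
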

\begin{proof}
Since $\langle -,-\rangle$ is non-degenerate, ${\rm ker}\mu = 0$. 
So all the subspace of $\mbQ \bar{0}$ are $0$ and $\mbQ [0]$.
And these have the ${\rm ker}\mu$-stability. 

Since $T(x)(\mbQ [0])=\mbQ [x]$, we obtain this Corollary. 
\end{proof}

\begin{rem}
We can define the homological Goldman Lie algebra $RH$ over an arbitrary commutative ring $R$ instead of $\mbQ$.
 And we can prove all the propositions in this paper if the coefficients of homological Goldman Lie algebras are in a commutative field of characteristic $0$.
\end{rem}


\end{document}